\documentclass[11pt]{article}
 \usepackage[top=1.9in,bottom=1.9in,left=2in,right=2in]{geometry}
  \usepackage{amsmath,amssymb}
  \usepackage{latexsym}
  \usepackage{tikz}
  \usetikzlibrary{decorations.pathreplacing}
  \usetikzlibrary{calc}
  \usepackage{setspace}
\usepackage{amsfonts} 
\usepackage{setspace}
\usepackage{subfigure}
\usepackage{url}
\usepackage{booktabs}
\usepackage{ifthen}
\usepackage{enumerate}

  \newcommand{\Z}{\mathbb{Z}}

  \newcommand{\hs}{\hspace*{\parindent}}

  \newcommand{\qed}{\hspace*{\fill} $\Box$\\}

  \newtheorem{theo}{\bfseries \hs Theorem}[section]

  \newtheorem{example}[theo]{\bfseries \hs Example}
  
  \numberwithin{equation}{section} 

 \begin{document}
\title{On the Edge-balanced Index Sets\\ of Complete Bipartite Graphs}   
\author{Elliot Krop \thanks{Department of Mathematics, Clayton State University, (ElliotKrop@clayton.edu)}\and Keli Sikes \thanks{Department of Mathematics, Clayton State University, (ksikes1@student.clayton.edu)}} 
\date{\today} 

\maketitle

\begin {abstract}
Let $G$ be a graph with vertex set $V(G)$ and edge set $E(G)$, and $f$ be a $0-1$ labeling of $E(G)$ so that the absolute difference in the number of edges labeled $1$ and $0$ is no more than one. Call such a labeling $f$ \emph{edge-friendly}.   The \emph{edge-balanced index set} of the graph $G$, $EBI(G)$, is defined as the absolute difference between the number of vertices incident to more edges labeled $1$ and the number of vertices incident to more edges labeled $0$ over all edge-friendly labelings $f$. In 2009, Lee, Kong, and Wang \cite{LeeKongWang} found the $EBI(K_{l,n})$ for $l=1,2,3,4,5$ as well as $l=n$. We continue the investigation of the $EBI$ of complete bipartite graphs of other orders.
\\[\baselineskip] 2000 Mathematics Subject Classification: 05C78, 05C25
\\[\baselineskip]
      Keywords: Edge-labeling, partial-coloring, edge-friendly labeling, friendly labeling, cordiality, friendly index set, edge-balance index set.
\end {abstract}

 \section{Introduction}
 
 \subsection{Definitions}
For basic graph theoretic notation and definition see Diestel \cite{Diest}. The complete bipartite graph with $n$ and $m$ vertices partitioned into the first and second \emph{parts}, respectively, where no pair of vertices in the same part are adjacent and all other pairs are adjacent, is denoted $K_{n,m}$. The \emph{neighborhood} of a vertex $v$, denoted $N(v)$, is the set of all vertices adjacent to $v$. A \textit{labeling} of a graph $G$ with $H \subseteq G$, is a function $f : H \rightarrow \Z_2$, where $\Z_2 = \{ 0,1 \}$. If $H=E(G)$ $(H=V(G))$ and $f$ is surjective, call the labeling an \emph{edge labeling} (\emph{vertex labeling}). Denote by $f(e)$ the \emph{label} on edge $e$, and let $e_f(i) = \mbox{card}\{ e \in E : f(e)=i \}$. For a vertex $v$, let $N_0(v) = \{ u \in V: f(uv)=0 \}$ and $N_1(v) = \{ u \in V: f(uv)=1 \}$. A labeling $f$ is said to be \textit{edge-friendly} if $|e_f(0)-e_f(1)| \leq 1$. An edge-friendly labeling $f : E \rightarrow \Z_2$ induces a partial vertex labeling $f^+ : V \rightarrow \Z_2$ defined by $f^+(v) = 0$ if the number of $0$-edges incident to $v$ is more than the number of $1$-edges incident to $v$ and $f^+(v) = 1$ if the number of $1$-edges incident to $v$ is more than the number of $0$-edges. If the number of $1$-edges incident to $v$ is equal to the number of $0$-edges then $f^+(v)$ is not defined, and we say that $v$ is \emph{unlabeled}. For $i\in \Z_2$ let $v_f(i) = \mbox{card}\{v \in V : f^+(v) = i\}$. The \emph{edge-balanced index set} of the graph $G$, $EBI(G)$, is defined as $\{ |v_f(0) - v_f(1)| : \mbox{the edge labeling $f$ is edge-friendly} \}$.

\subsection{History and Motivation}

The study of balanced vertex labelings was introduced by S.M.~Lee, A.~Liu, and S.K.~Tan \cite{LeeLiuTan} in 1992. Three years later, M.~Kong and S.M.~Lee \cite{KongLeeEBI} continued work in balanced edge labelings. In the second labeling problem, the authors attempted to classify the $EBI(G)$, which proved to be quite difficult. For more information, J.~Gallian's dynamic survey of graph labeling problems in the \emph{Electronic Journal of Combinatorics} \cite{Gallian} provides an excellent overview of the subject. 

There have been many attempts to classify $EBI(G)$ by looking at different families of graphs. A common problem in attempting to find the $EBI(G)$ for a particular graph family is in the search for the maximal element in the set. Often after such an element is found, the authors produce an algorithm for the lesser terms. 

For complete bipartite graphs, Lee, Kong, and Wang \cite{LeeKongWang} found the $EBI(K_{l,n})$ for $l=1,2,3,4,5$ as well as $l=n$. In particular, they found $EBI(K_{n,n})=\{0,1,2,3,\dots, 2n-8\}$ for even $n$ and \{0,2,4,\dots, 2n-4\} for odd $n$. Little more is known for other graph families, as the edge-balance index set of regular graphs has been found for $2$-regular graphs, however, even for cubic graphs it is only known in some cases (M\"obius Ladders). We continue this inquiry by attempting to determine the $EBI$ of unknown cases of $K_{n,m}$, following the methods of \cite{KropLeeRaridan}.

\section{Complete bipartite graphs with two odd parts}

\subsection{An Example}

We consider some special cases of the form $K_{n,n-2}$ where $n$ is small.

 \begin{example}
 $EBI(K_{3,5})=\{0,2\}$
 \end{example}
 
 \begin{figure}[ht]
\centering
\subfigure[$|v_f(0)-v_f(1)|=2$.]
{
 \begin{tikzpicture}
 
	 \node (v1) at (0,0) [circle,draw=black!20,fill=black!]{};
	 \node (v2) at (2,0) [circle,draw=black]{};
	 \node (v3) at (4,0) [circle,draw=black]{};
	 \node (v4) at (6,0) [circle,draw=black]{};
	 \node (v5) at (8,0) [circle,draw=black!20,fill=black!]{};
	 \node (u1) at (1,2) [circle,draw=black]{};
	 \node (u2) at (3,2) [circle,draw=black]{};
	 \node (u3) at (5,2) [circle,draw=black!20,fill=black!]{};

\draw [line width=2.5pt, color=red](u1) -- (v1);
\draw [line width=2.5pt, color=red](u1) -- (v2);
\draw [line width=2.5pt, color=red](u1) -- (v4);
\draw [line width=2.5pt, color=red](u2) -- (v2);
\draw [line width=2.5pt, color=red](u2) -- (v3);
\draw [line width=2.5pt, color=red](u2) -- (v4);
\draw [line width=2.5pt, color=red](u3) -- (v3);
\draw (u1) -- (v3);
\draw (u1) -- (v5);
\draw (u2) -- (v1);
\draw (u2) -- (v5);
\draw (u3) -- (v1);
\draw (u3) -- (v2);
\draw (u3) -- (v4);
\draw (u3) -- (v5);

\end{tikzpicture}
}
 
 \subfigure[$|v_f(0)-v_f(1)|=0$.]
{
 \begin{tikzpicture}
 
	 \node (v1) at (0,0) [circle,draw=black!20,fill=black!]{};
	 \node (v2) at (2,0) [circle,draw=black]{};
	 \node (v3) at (4,0) [circle,draw=black]{};
	 \node (v4) at (6,0) [circle,draw=black!20,fill=black!]{};
	 \node (v5) at (8,0) [circle,draw=black!20,fill=black!]{};
	 \node (u1) at (1,2) [circle,draw=black]{};
	 \node (u2) at (3,2) [circle,draw=black]{};
	 \node (u3) at (5,2) [circle,draw=black!20,fill=black!]{};

\draw [line width=2.5pt, color=red](u1) -- (v1);
\draw [line width=2.5pt, color=red](u1) -- (v2);
\draw [line width=2.5pt, color=red](u1) -- (v3);
\draw [line width=2.5pt, color=red](u2) -- (v2);
\draw [line width=2.5pt, color=red](u2) -- (v3);
\draw [line width=2.5pt, color=red](u2) -- (v4);
\draw [line width=2.5pt, color=red](u3) -- (v5);
\draw (u1) -- (v4);
\draw (u1) -- (v5);
\draw (u2) -- (v1);
\draw (u2) -- (v5);
\draw (u3) -- (v1);
\draw (u3) -- (v2);
\draw (u3) -- (v4);
\draw (u3) -- (v3);

\end{tikzpicture}
}
 \end{figure}
 
\subsection{Parts differing by two}

We begin by showing our general method in the simplest case.

\begin{theo}\label{simplecase}
$EBI(K_{n,n-2})=\{0,2, \dots, 2n-10, 2n-8\}$ for odd $n>5$
\end{theo}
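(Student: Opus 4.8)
\medskip
\noindent\textbf{Proof proposal.} The plan is to split the statement into two independent tasks: an upper bound $|v_f(0)-v_f(1)|\le 2n-8$ valid for \emph{every} edge-friendly $f$, and a family of constructions realizing each even value in $\{0,2,\dots,2n-8\}$. First I would record the structural constants. Writing $A$ for the part of size $n$ and $B$ for the part of size $n-2$, each vertex of $A$ has odd degree $n-2$ and each vertex of $B$ has odd degree $n$; hence no vertex is ever unlabeled and $v_f(0)+v_f(1)=2n-2$ for all edge-friendly $f$. In particular $|v_f(0)-v_f(1)|$ is even, which already confines $EBI(K_{n,n-2})$ to the even integers. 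Moreover $|E|=n(n-2)=(n-1)^2-1$ is odd, so $\{e_f(0),e_f(1)\}=\{\tfrac{(n-1)^2}{2}-1,\tfrac{(n-1)^2}{2}\}$, and in particular $e_f(1)\le \tfrac{(n-1)^2}{2}$ (an integer since $n$ is odd).

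For the upper bound I would argue by bounding the minority dominance class from below. Let $x_a$ and $y_b$ be the numbers of $1$-edges at $a\in A$ and $b\in B$; then $a$ is $1$-dominant iff $x_a\ge\tfrac{n-1}{2}$ and $b$ is $1$-dominant iff $y_b\ge\tfrac{n+1}{2}$, with symmetric criteria for $0$-dominance. If all of $A$ were $1$-dominant then $e_f(1)=\sum_a x_a\ge \tfrac{n(n-1)}{2}>\tfrac{(n-1)^2}{2}\ge e_f(1)$, a contradiction; a similar count (using $(n-2)\tfrac{n+1}{2}=\tfrac{(n-1)^2}{2}+\tfrac{n-3}{2}$) rules out all of $B$ being $1$-dominant for $n>3$, and swapping the roles of $0$ and $1$ rules out the monochromatic variants. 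Thus each part contains at least one vertex of each dominance class, so (taking the $1$-class to be the majority without loss of generality) the minority class already has size $\ge 2$. The crux is to upgrade this to size $\ge 3$: assuming a minority of size exactly two, it must split as one vertex in $A$ and one in $B$, which forces the edge-count inequalities above to hold with near-equality and pins the $1$-edges down to a rigid near-regular bipartite configuration; the sharp obstruction must then be extracted from the simultaneous interaction of this forced degree sequence with the $\pm1$ slack in $e_f(1)$. Granting a minority of size $\ge 3$, we conclude $|v_f(0)-v_f(1)|=(2n-2)-2\min(v_f(0),v_f(1))\le 2n-8$.

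For achievability I would produce, for each integer $j$ with $3\le j\le n-1$, an edge-friendly labeling whose minority class has exactly $j$ vertices, giving index $2n-2-2j$; as $j$ runs from $3$ to $n-1$ this sweeps out precisely $2n-8,2n-10,\dots,2,0$. The extremal case $j=3$ is built by choosing the $1$-edges to form a near-regular bipartite graph that meets the $1$-dominance threshold on all but three vertices, whose realizability follows from the Gale--Ryser conditions since the prescribed degree sequences are near-regular. For the remaining values I would descend: starting from the extremal labeling, repeatedly swap the labels of a suitably chosen $1$-edge and $0$-edge. Such a swap preserves $e_f(0)$ and $e_f(1)$, hence edge-friendliness, and if $e,e'$ are selected so that exactly one vertex (one sitting at its dominance threshold) crosses from the majority to the minority class while the other affected vertices retain comfortable margins, the index drops by exactly $2$. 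Iterating reaches every even value down to $0$.

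The main obstacle I anticipate is the upper bound of the second paragraph, namely proving that the minority class cannot have size two. The elementary edge counts only force one minority vertex in \emph{each} part; sharpening this to a third vertex requires exploiting the exact value of $e_f(1)$ together with the bipartite realizability constraints at once, and it is this coupling between the two parts that is delicate. By contrast, once the extremal labeling is in hand, the descent of the third paragraph is essentially mechanical, with the only care needed being the guarantee that a label-preserving swap flipping a single vertex is always available as the index approaches $0$.
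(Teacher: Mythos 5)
Your overall plan (parity reductions, a counting upper bound, then an extremal construction followed by a swap descent) is the same skeleton as the paper's proof, and your parity facts and your ``minority class has size at least $2$'' counts are correct. The genuine gap is exactly the step you flag yourself: excluding a minority class of size two. This gap cannot be closed, because the configuration you are trying to rule out actually exists. Follow your own near-equality analysis: with $A$ the part of size $n$ and $B$ the part of size $n-2$, prescribe $1$-degrees $x_{a^*}=0$ and $x_a=\frac{n-1}{2}$ for the other $n-1$ vertices of $A$, and $y_{b^*}=2$ and $y_b=\frac{n+1}{2}$ for the other $n-3$ vertices of $B$. Both sides sum to $\frac{(n-1)^2}{2}$, so such a $1$-edge graph gives $e_f(1)=\frac{(n-1)^2}{2}$ and $e_f(0)=\frac{(n-1)^2}{2}-1$, an edge-friendly labeling; and this degree pair satisfies the very Gale--Ryser conditions you invoke for your extremal construction, so it is realizable. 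Every vertex except $a^*$ and $b^*$ is then $1$-dominant, so $|v_f(0)-v_f(1)|=(2n-2)-4=2n-6>2n-8$. Concretely for $n=7$: in $K_{7,5}$ with parts $\{a^*,a_1,\dots,a_6\}$ and $\{b^*,b_1,\dots,b_4\}$, label by $1$ the eighteen edges $a_1b_1,a_1b_2,a_1b_3$, $a_2b_1,a_2b_2,a_2b_4$, $a_3b_1,a_3b_3,a_3b_4$, $a_4b_2,a_4b_3,a_4b_4$, $a_5b_1,a_5b_2,a_5b^*$, $a_6b_3,a_6b_4,a_6b^*$, and label the remaining seventeen edges $0$. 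Then $e_f(1)=18$, $e_f(0)=17$, each $a_i$ is $1$-dominant ($3$ of $5$), each $b_j$ is $1$-dominant ($4$ of $7$), and $a^*,b^*$ are $0$-dominant, so the index is $10-2=8=2n-6$, exceeding the claimed maximum of $6$.

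So your instinct that this step is ``delicate'' is an understatement: the step is false, and hence no argument can fill the gap as the theorem is stated. It is instructive to compare with the paper's treatment of the same step. The paper supposes two dense vertices $u,v$ and asserts $e(1)\geq\frac{n-1}{2}(n-1)$ and $e(0)\leq n-3+\frac{n-3}{2}(n-1)$, concluding that the difference exceeds $1$. But the dense vertex $v$ lies in the part of size $n$ and therefore has degree $n-2$, so the correct bound is $e(0)\leq (n-2)+\frac{n-3}{2}(n-1)$, which yields only $e(1)-e(0)\geq 1$ --- no contradiction with edge-friendliness, and the labeling above attains this bound with equality. In short: your achievability-plus-descent half is sound in outline (it parallels the paper's switching argument), but the upper bound you correctly identified as the crux is unprovable, because for odd $n\geq 7$ the maximum of $EBI(K_{n,n-2})$ is at least $2n-6$, not $2n-8$. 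The statement your program would actually prove, taking the size-two minority (justified by Gale--Ryser) as the extremal case and running your swap descent from it, is $EBI(K_{n,n-2})=\{0,2,\dots,2n-6\}$.
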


\begin{proof}

We label edges of $G=K_{n,n-2}$ for maximum $EBI$. Vertices will be of two types, those which are incident to many more $0$-edges than $1$-edges, called \emph{dense}, and those which are incident to marginally more $1$-edges than $0$-edges, called \emph{sparse}. To find the maximum element of EBI, our goal is to minimize the number of dense vertices and maximize the number of sparse vertices. Call the vertices of the first part $A$ of $G$, $\{u, u_1, \dots, u_{n-3}\}$ and those of the second part $B$, $\{v,v', v_1, \dots, v_{n-2}\}$. Set $D=\{u,v,v'\}$ and $S=V(G)-D$.

The vertices $u,v,v'$ will be dense, so that all incident edges will be labeled zero, the rest of the vertices will be sparse. We will make half the sparse vertices in the larger part less sparse by one edge labeled $1$. 

Define a cyclic order on the vertices in $S$ so that for positive $i < n-3$ and $u_i$, the \emph{succeeding} vertex, $s(u_i)$, is $u_{i+1}$ and for $i=n-3$ the \emph{succeeding} vertex is $u_1$. For positive $i < n-2$ and $v_i$, the \emph{succeeding} vertex, $s(v_i)$, is $v_{i+1}$ and for $i=n-2$ the \emph{succeeding} vertex is $v_1$. For positive $i \leq n-3$ and $v_i$, the \emph{next} vertex is $u_{i}$, the \emph{next} vertex of $v_{n-2}$ is $u_1$. For any positive integer $k$, the $next_k(v_i)=s(s(\dots s(u_i)))$ where the number of iterations of $s$ is $k$. For $i \in \{1, \dots, n-2\}$ define the set $U_i=\{next(v_i), next_1(v_i), \dots, next_{\frac{n-1}{2}-1}(v_i)\}$ so that the cardinality of $U_i$ is $\frac{n-1}{2}$. 
For $i \in \{1, \dots, n-2\}$ define the set $U'_i=\{next(v_i), next_1(v_i), \dots, next_{\frac{n-1}{2}}(v_i)\}$ so that the cardinality of $U'_i$ is $\frac{n+1}{2}$. 

For $1\leq i \leq \frac{n-3}{2}$ label the edges in $S$ between the vertices in $U_i$ and $v_i$ by $1$ and the remaining edges incident to $v_i$ by $0$. For $\frac{n-1}{2}\leq i \leq n-2$ label the edges in $S$ between the vertices in $U'_i$ and $v_i$ by $1$ and the remaining edges incident to $v_i$ by $0$.

Notice that under this labeling, 
\[e(0)=2(n-3)+\frac{n-3}{2}\frac{n-5}{2}+\frac{n-1}{2}\frac{n-3}{2}\]
\[e(1)=\frac{n-3}{2}\frac{n+1}{2}+\frac{n-1}{2}\frac{n-1}{2}\]
and $e(0)-e(1)=-1$. Labeling the edges $uv$ and $uv'$ by $0$ produces the edge friendly labeling and the maximal EBI term.

\vspace{.2 in}

To verify that larger values of EBI are impossible, we consider fewer dense vertices and count the maximum number of $1$-edges, to show that the graph cannot be edge-friendly. Suppose $G$ is labeled with $2$ dense vertices, say $u, v$. The maximum number of $0$-edges is attained by making the rest of the vertices sparse. Therefore, 
\[e(1)\geq \frac{n-1}{2}(n-1)\]
\[e(0)\leq n-3 + \frac{n-3}{2}(n-1)\]

The difference in the above quantities is greater than $1$ for $n\geq 5$, so any labeling with $2$ dense vertices is not edge-friendly.

\vspace{.2 in}

To attain the smaller values of EBI we relabel the graph and switch $0$ and $1$ labels of pairs of edges incident to the same vertex. 

The vertices $u,v,v', v''$ will be dense, so that all incident edges will be labeled zero, the rest of the vertices will be sparse.

Define a cyclic order on the vertices in $S$ so that for positive $i < n-3$ and $u_i$, the \emph{succeeding} vertex, $s(u_i)$, is $u_{i+1}$ and for $i=n-3$ the \emph{succeeding} vertex is $u_1$. For positive $i < n-3$ and $v_i$, the \emph{succeeding} vertex, $s(v_i)$, is $v_{i+1}$ and for $i=n-3$ the \emph{succeeding} vertex is $v_1$. For positive $i \leq n-3$ and $u_i$, the \emph{next} vertex is $v_{i}$. For any positive integer $k$, the $next_k(u_i)=s(s(\dots s(v_i)))$ where the number of iterations of $s$ is $k$. For $i \in \{1, \dots, n-3\}$ define the set $V_i=\{next(u_i), next_1(u_i), \dots, next_{\frac{n-1}{2}}(u_i)\}$ so that the cardinality of $V_i$ is $\frac{n+1}{2}$. 

For $1\leq i \leq n-3$ label the edges in $S$ between the vertices in $V_i$ and $u_i$ by $1$ and the remaining edges incident to $v_i$ by $0$.

Next we count the number of edges labeled $0$ and those labeled $1$ to verify that the labeling is edge-friendly.

\[e(0)=4(n-3)+\frac{n-7}{2}(n-3)=\frac{n+1}{2}(n-3)\]
\[e(1)=\frac{n+1}{2}(n-3)\]

To complete the labeling, let $(u,v)$ and $(u,v')$ be labeled $0$ and $(u,v'')$ be labeled $1$.

Next we switch edge-labels so that every pairwise switch decreases $|v(1)-v(0)|$ by $2$.

\begin{itemize}
	\item$((v,u_2),(u_2,v_2)), \dots, ((v,u_{\frac{n-1}{2}}),(u_{\frac{n-1}{2}},v_{\frac{n-1}{2}}))$\\ reducing the 1-degree on $v_2, \dots, v_{\frac{n-1}{2}}$ by one.
  \item$((v',u_1),(u_1,v_2)), ((v',u_2),(u_2,v_3)), \dots,$\\ $((v',u_{\frac{n-3}{2}}),(u_{\frac{n-3}{2}},v_{\frac{n-1}{2}}))$ switching the labels on $v_2, \dots, v_{\frac{n-1}{2}}$ from $1$ to $0$.
  \item$((u,v_{\frac{n+1}{2}}),(v_{\frac{n+1}{2}},u_{\frac{n+1}{2}})), \dots, ((u,v_{n-4}),(v_{n-4},u_{n-4}))$ switching the labels on $u_{\frac{n+1}{2}},\dots, u_{n-4}$
  
  \end{itemize}
Hence, we have produced the required EBI.

\qed\end{proof}

\subsection{General Case}

We use the method of the previous section to find the EBI of $K_{n,n-2a}$ for $a>1$.

\begin{theo}
   $EBI(K_{n,n-2a})=\{0,2, \dots, 2n-2a-8, 2n-2a-6\}$ for $1\leq a \leq \frac{n-3}{4}$ and odd $n>5$
  \end{theo}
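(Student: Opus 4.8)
The plan is to run the proof of Theorem~\ref{simplecase} with the extra parameter $a$ carried through every edge count, and to isolate where the hypothesis $a\le\frac{n-3}{4}$ is forced. Write $A$ for the part with $n-2a$ vertices and $B$ for the part with $n$ vertices. For the maximal term I would take exactly three dense vertices, placed as in the base case---one vertex $u\in A$ and two vertices $v,v'\in B$---label all edges meeting $D=\{u,v,v'\}$ by $0$, and arrange the remaining $2n-2a-3$ vertices to be sparse, so that $v_f(0)=3$, $v_f(1)=2n-2a-3$, and $|v_f(1)-v_f(0)|=2n-2a-6$. All ones live on the sparse--sparse complete bipartite graph $K_{n-2a-1,\,n-2}$, distributed by the same cyclic ``next/succeeding'' device as before: each sparse vertex $v_i$ of $B$ receives ones to a consecutive block $U_i$ of its sparse neighbours in $A$, where the two admissible block sizes are $\frac{n-2a+1}{2}$ and $\frac{n-2a+3}{2}$, chosen so that every $v_i$ gets a strict one-majority and so that the global totals differ by one.

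Next I would verify edge-friendliness by the $a$-analogue of the displayed computation in Theorem~\ref{simplecase}: count the zeros forced at $D$, the ones and zeros inside $K_{n-2a-1,\,n-2}$, add the two inter-dense edges $uv,uv'$ by hand, and check $|e(0)-e(1)|=1$; since $|E|=n(n-2a)$ is odd this is optimal, and it fixes how many $v_i$ may take the larger block. Maximality of $2n-2a-6$ then follows from the two-dense-vertex count: with only two dense vertices every remaining vertex is sparse and hence carries a strict one-majority, which forces $e(1)>e(0)+1$ throughout the allowed range, so three dense vertices are genuinely necessary.

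To realise the smaller even values I would pass to a four-dense configuration $\{u,v,v',v''\}$ with $u\in A$ and $v,v',v''\in B$, which yields the term $2n-2a-8$. Here the ones are assigned by the $V_i$-construction indexed by the sparse vertices $u_i$ of $A$: each $u_i$ sends ones to a block $V_i$ of $\frac{n+1}{2}$ sparse vertices of $B$. Starting from this labeling I would perform a sequence of pairwise $0\leftrightarrow1$ switches on edges sharing a vertex, each lowering $|v_f(1)-v_f(0)|$ by exactly $2$, sweeping through $2n-2a-10,2n-2a-12,\dots,2,0$; one checks that every switch preserves edge-friendliness and, by counting the three families of available switches, that enough of them exist to reach $0$ without skipping a value.

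I expect the main obstacle to be the sparse--sparse distribution once $a>1$, specifically feeding enough ones to the vertices of the \emph{larger} part $B$. In the four-dense construction the only suppliers of ones are the $n-2a-1$ sparse vertices of $A$, while each of the $n-3$ sparse vertices of $B$ demands at least $\frac{n-2a+1}{2}$ of them; comparing the total supply $(n-2a-1)\frac{n+1}{2}$ against this demand, and then refining the average to the least-covered vertex produced by the cyclic scheme, is exactly what breaks down when $a$ grows, and a direct estimate shows it survives precisely up to $a=\frac{n-3}{4}$---the stated bound. Confirming that the same range keeps the three-dense maximal construction and every intermediate switch edge-friendly is then routine but must be rechecked uniformly in $a$.
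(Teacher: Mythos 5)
Your treatment of the maximal term and of maximality is essentially the paper's: three dense vertices $\{u,v,v'\}$ with the cyclic block construction (this is the paper's $c=2$ case, where the balancing parameter $K$ equals $0$, giving exactly your two block sizes $\frac{n-2a+1}{2}$ and $\frac{n-2a+3}{2}$), and the two-dense-vertex count for impossibility. That part is sound. The gap is in your construction for the lower values. Your four-dense configuration copies the $a=1$ labeling verbatim: every edge meeting $D=\{u,v,v',v''\}$ is labeled $0$ and each sparse $u_i\in A$ sends exactly $\frac{n+1}{2}$ ones into the sparse part of $B$. Counting gives
\[
e(0)-e(1)\;=\;n(n-2a)-(n-2a-1)(n+1)\;=\;2a+1 ,
\]
so the labeling is never edge-friendly as stated, and even after relabeling all three dense--dense edges $uv,uv',uv''$ by $1$ (the patch that rescues the $a=1$ case, where $2a+1=3$) the discrepancy is $2a-5$, which is $\geq 3$ for every $a\geq 4$, i.e.\ for all $n\geq 19$ in the allowed range. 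The point you cannot avoid is that as $a$ grows, the zeros forced at the dense vertices overwhelm the ones available from uniform minimum-majority blocks; some sparse vertices must carry extra ones. This is precisely what the paper's parameter $K$ (and its two block sizes $U_i$, $U'_i$) is for, and it is also why the paper does not switch down from a small dense set: it first grows the dense set through $c=2,3,\dots,2a+1$ vertices in $B$, realizing the values $2n-2a-6$ down to $2n-6a-4$ by construction, and only then switches from the $c=2a+1$ labeling to reach $0$.

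A second, related problem is your diagnosis of where $a\leq\frac{n-3}{4}$ is forced. Your supply/demand comparison --- total ones $(n-2a-1)\frac{n+1}{2}$ against demand $(n-3)\frac{n-2a+1}{2}$ --- has surplus $n-4a+1$, so it survives up to $a\leq\frac{n+1}{4}$, not $\frac{n-3}{4}$; and in any case meeting the sparse-$B$ demand is not the binding constraint (it holds even when your labeling fails to be edge-friendly). In the paper the hypothesis is used at the very end, in the switching phase: one must check that the $0$-edges incident to the dense set $B\cap D$ suffice to flip $n-3a-2$ sparse vertices, i.e.\ that $(K+2)(n-3a-2)\leq\frac{n-2a-1}{2}(2a+1)$, and this inequality is what holds exactly for $a\leq\frac{n-3}{4}$. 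So to repair your argument you would need (i) a corrected four-dense (or growing-dense) construction in which roughly $a$ sparse vertices take one extra $1$-edge so that $e(0)-e(1)=1$, and (ii) a switching count of the paper's type, carried out against the actual absorption capacity $\frac{n-1}{2}+3\cdot\frac{n-2a-1}{2}$ of your four dense vertices, to certify that every even value down to $0$ is reached.
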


\begin{proof}
We follow the structure and notation of the proof of Theorem \ref{simplecase}. For $a$ chosen as in the statement of the theorem and $2\leq c \leq 2a+1$, call the vertices of the first part $A$ of $G=K_{n,n-2a}$, $\{u, u_1, \dots, u_{n-2a-1}\}$ and those of the second part $B$, $\{v^1,v^2,\dots,v^c, v_1, \dots, v_{n-c}\}$. Set $D=\{u,v^1,v^2, \dots, v^c\}$ and $S=V(G)-D$. As before, the vertices of $D$ will be dense, so that all incident edges will be labeled $0$. The vertices of $S$ we will call sparse.

We verify that a smaller dense set where $c=1$ would not produce an edge-friendly labeling and hence that the EBI is no larger than $2n-2a-6$. We ignore the edge $(u,v)$, performing our edge-label count on the graph $G\backslash(u,v)$, and reintroduce it after the calculation. If $D=\{u,v\}$ with $u\in V(A)$ and $v\in V(B)$, then 
\[e(0)\leq n-2a-1 + \frac{n-2a-1}{2}(n-1)\]
\[e(1)\geq \frac{n-2a+1}{2}(n-1)\] and
\[e(1)-e(0) \geq 2a.\]
The edge $(u,v)$ can be labeled $0$, but nonetheless, the labeling is not edge-friendly for any $a>1$.

Define a cyclic order on the vertices in $S$ so that for positive $i < n-2a-1$ and $u_i$, the \emph{succeeding} vertex, $s(u_i)$, is $u_{i+1}$ and for $i=n-2a-1$ the \emph{succeeding} vertex is $u_1$. For positive $i < n-c$ and $v_i$, the \emph{succeeding} vertex, $s(v_i)$, is $v_{i+1}$ and for $i=n-c$ the \emph{succeeding} vertex is $v_1$. For positive $i \leq n-2a-1$ and $v_i$, the \emph{next} vertex is $u_{i}$. For $i=n-2a-1+j$, $1 \leq j \leq 2a+1-c$, the \emph{next} vertex of $v_i$ is $u_j$. For any positive integer $k$, the $next_k(v_i)=s(s(\dots s(u_i)))$ where the number of iterations of $s$ is $k$. 

We consider the labeling where sparse vertices in $S\cap B$ are incident to exactly $\frac{n-2a+1}{2}$ $1$-edges. We define $K$ to be the maximum number of times we can relabel an $0$-edge incident to every vertex in $S\cap B$ (but not to $u$) as a $1$-edge, so that $e(1)\leq e(0)+1$. Note that if we ignore all edges between $D\cap A$ and $D\cap B$,

\[e(1)=(n-c)(\frac{n-2a+1}{2}+K)\]
\[e(0)=c(n-2a-1)+(n-c)(\frac{n-2a-1}{2}-K)\]

Notice that $K$ cannot exceed the number of available $1$-edges at each vertex in $A\cap S$, so $K\leq \frac{n-2a-3}{2}$. By applying this upper bound to the above expressions to calculate $e(1)-e(0)$, we see that there are enough $1$-edges so long as $a\leq \frac{n-3}{2}$.

If $c$ is odd, then $K$ is the maximum integer so that $N=2ac-n(c-1) +2K(n-c)\leq 1$, where $N$ is the difference $e(1)-e(0)$. Note that switching a label on an edge from $0$ to $1$ increases $N$ by $2$.

For $i \in \{1, \dots, n-c\}$ define the set \[U_i=\{next(v_i), next_1(v_i), \dots, next_{\frac{n-2a-1}{2}+K}(v_i)\}.\] For $i \in \{1, \dots, n-c\}$ define the set \[U'_i=\{next(v_i), next_1(v_i), \dots, next_{\frac{n-2a+1}{2}+K}(v_i)\}.\]

\emph{Step 1}: If $\left\lfloor \frac{-N}{2}\right\rfloor=0$, then continue to \emph{Step 3}.

\emph{Step 2}: For $1\leq i \leq \left\lfloor \frac{-N}{2}\right\rfloor$ label the edges in $S$ between vertices in $U'_i$ and $v_i$ by $1$ and the remaining edges incident to $v_i$ by $0$.

\emph{Step 3}: For $\left\lfloor \frac{-N}{2}\right\rfloor+1 \leq i \leq n-c$ label the edges in $S$ between vertices in $U_i$ and $v_i$ by $1$ and the remaining edges incident to $v_i$ by $0$.

Concluding the labeling, we label the edges $(u,v^1), \dots, (u,v^{\frac{c-1}{2}})$ by $0$ and the edges $(u,v^{\frac{c+1}{2}}),\dots, (u,v^c)$ by $1$.

Notice that this labeling is edge-friendly by construction.

If $c$ is even, then we define $K,N,U_i, \mbox{ and } U'_i$ as above and repeat Steps 1-3. We conclude by labeling the edges of $(u,v^1), \dots, (u,v^{\frac{c}{2}})$ by $0$ and the edges $(u,v^{\frac{c+2}{2}}),\dots, (u,v^c)$ by $1$.

Again, this labeling is edge-friendly by construction.

For both the odd and even case, we justify that such a labeling is possible by counting the number of $1$-edges incident to any vertex $v_i$ in our labeling and showing that this does not exceed the number of vertices $u_j$ in the other part of $G$. Notice that it is enough to perform such a count for the case when $c=2a+1$, since the $1$-degree of $v_i$ is maximized. We must show
\[n-2a-1\geq \frac{n-2a+1}{2}+a(2a+1-n)\] which is true when
\[a\leq \frac{n}{2} -\frac{3}{4}\] and this inequality is satisfied by the range of $a$.

Thus, we have produced the following values of the EBI: $\{2n-6a-4,\dots, 2n-2a-6\}$. 

To show the lesser values, we switch $0$ and $1$ labels of pairs of edges incident to the same vertex in the case where $c=2a+1$ so that every pairwise switch decreases $|v(1)-v(0)|$ by $2$. By this procedure, we switch the labels on $n-3a-2$ vertices in $B\cap S$ from $1$ to $0$.

For ease of notation we define a cyclic order on the vertices in $B\cap D$ so that for positive $i < 2a+1$ and $v^i$, the \emph{succeeding} vertex, $s_1(v^i)$, is $v^{i+1}$ and for $i=2a+1$ the \emph{succeeding} vertex, $s_1(v^i)$, is $v^1$. For any positive integer $k$, we define the $k^{th}$ succceding vertex as $s_k(v^i)=s_1(s_{k-1}(v^i))$.

Note that
\begin{enumerate}
	\item $\deg_1(v_i)=\frac{n-2a+3}{2}+K \mbox{ for } 0\leq i \leq \left\lfloor \frac{-N}{2}\right\rfloor$\label{a}
  \item $\deg_1(v_i)=\frac{n-2a+1}{2}+K \mbox{ for } \left\lfloor \frac{-N}{2}\right\rfloor +1 \leq i \leq n-2a-1$\label{b}
\end{enumerate}

For \ref{a}. each vertex in $B\cap S$ requires $K+2$ switches of labels on incident edges from $0$ to $1$ so that the label on the vertex becomes $0$.
For \ref{b}. each vertex in $B\cap S$ requires $K+1$ switches of labels on incident edges from $0$ to $1$ so that the label on the vertex becomes $0$.

If $n-3a-2\leq \lfloor \frac{-N}{2}\rfloor$, we switch the labels on the following edges from $0$ to $1$:

\begin{itemize}
	\item Edges from vertices $(v^1, s_1(v^1), \dots, s_{K}(v^1))$ to $v_1$ reducing the 1-degree on $v_1$ by one.
	\item Edges from vertices $(s_{K+1}(v^1), \dots, s_{2K+1}(v^1))$ to $v_2$ reducing the 1-degree on $v_1$ by one.  
 \end{itemize}

 and so on until we reach vertex $v_{n-3a-2}$.

\vspace{.1 in}
 
If $n-3a-2> \lfloor \frac{-N}{2}\rfloor$, we switch the labels on the following edges from $0$ to $1$:

\begin{itemize}
	\item Edges from vertices $(v^1, s_1(v^1), \dots, s_{K}(v^1))$ to $v_1$ reducing the 1-degree on $v_1$ by one.
	\item Edges from vertices $(s_{K+1}(v^1), \dots, s_{2K+1}(v^1))$ to $v_2$ reducing the 1-degree on $v_2$ by one.  
 \end{itemize}
\vdots
\begin{itemize}
\item Edges from vertices $(s_{(\lfloor\frac{-N}{2}\rfloor-1)(K+1)}(v^1), \dots, s_{(\lfloor\frac{-N}{2}\rfloor)(K+1)-1}(v^1))$ to $v_{\lfloor\frac{-N}{2}\rfloor}$ reducing the 1-degree on $v_{\lfloor\frac{-N}{2}\rfloor}$ by one. 
\item Edges from vertices $(s_{(\lfloor\frac{-N}{2}\rfloor)(K+1)}(v^1), \dots, s_{(\lfloor\frac{-N}{2}\rfloor+1)(K+1)}(v^1))$ to $v_{\lfloor\frac{-N}{2}\rfloor+1}$ reducing the 1-degree on $v_{\lfloor\frac{-N}{2}\rfloor+1}$ by one. 
\end{itemize}
 
 and continuing with sets of succeeding $K+2$ vertices of $B\cap D$ until we relabel $v_1, \dots, v_{n-3a-2}$.
 
 Lastly, to show that such switches are possible, we calculate the number of edges incident with $B \cap D$ that we would need to switch, and show that this amount does not exceed the number of available edges. This is done by showing that
 $(K+2)(n-3a-2)\leq \frac{n-2a-1}{2}(2a+1)$ holds for $a\leq \frac{n-3}{4}$.
 
Hence, we have produced the required EBI.

\qed\end{proof}

\end{document}